\theoremstyle{definition}
\newtheorem{theorem}{Theorem}[section]
\newtheorem{lemma}[theorem]{Lemma}
\newtheorem{definition}[theorem]{Definition}
\newtheorem{example}[theorem]{Example}
\newtheorem{proposition}[theorem]{Proposition}
\newtheorem{remark}[theorem]{Remark}
\newtheorem{corollary}[theorem]{Corollary}
\newtheorem{conjecture}[theorem]{Conjecture}
\numberwithin{equation}{section}
\numberwithin{figure}{section}
\begin{document}

\title{Set-theoretic Yang-Baxter cohomology of cyclic biquandles}

\author{Minyi Liang}
\author{Xiao Wang}
\author{Seung Yeop Yang}

\email[Minyi Liang]{liangmymath@gmail.com}
\email[Xiao Wang]{wangxiaotop@jlu.edu.cn}
\email[Seung Yeop Yang]{seungyeop.yang@knu.ac.kr}

\address{Department of Mathematics, Jilin University, Changchun, 130012, China}
\address{Department of Mathematics, Jilin University, Changchun, 130012, China}
\address{KNU G-LAMP Project Group, KNU Institute of Basic Sciences, Department of Mathematics, Kyungpook National University, Daegu, 41566, Republic of Korea}

\begin{abstract}
We completely determine the free parts of the set-theoretic Yang-Baxter (co)homology groups of finite cyclic biquandles, along with fully computing the torsion subgroups of their 1st and 2nd homology groups.
Furthermore, we provide upper bounds for the orders of torsions in the 3rd and higher dimensional homology groups.
This work partially solves the conjecture that the normalized set-theoretic Yang-Baxter homology of cyclic biquandles satisfy $H_{n}^{NYB}(C_{m}) = \mathbb{Z}^{(m-1)^{n-1}} \oplus \mathbb{Z}_{m}$ when $n$ is odd and $H_{n}^{NYB}(C_{m}) = \mathbb{Z}^{(m-1)^{n-1}}$ when $n$ is even. In addition, we obtain cocycle representatives of a basis for the rational cohomology group of a cyclic biquandle and introduce several non-trivial torsion homology classes.
\end{abstract}

\keywords{Biquandle, cyclic biquandle, set-theoretic Yang-Baxter cohomology, Betti number, torsion}

\subjclass[2020]{Primary: 20G10, 55N35, 57K18; Secondary: 58H10, 57K12, 55S20.}

\maketitle

\section{Introduction}

The Yang-Baxter equation \cite{Bax, CYan} first appeared in the study of theoretical physics, and is closely related to various fields: quantum field theory, the theory of quantum groups, and recently quantum information science. See \cite{Dri, Fro, Kas, KL, Tur} for further details. The Jones polynomial \cite{Jon1} and the HOMFLY-PT polynomial \cite{FYHLMO, PT} were introduced as invariants for oriented links. Since it is known that they can be obtained using certain families of solutions of the Yang-Baxter equation \cite{Jon2, Tur}, the Yang-Baxter equation plays an important role in the study of low-dimensional topology and knot theory.

A homology theory of set-theoretic Yang-Baxter operators was established by J. S. Carter, M. Elhamdadi, and M. Saito \cite{CES}. Later, it was generalized for pre-Yang-Baxter operators independently by V. Lebed \cite{Leb} and J. H. Przytycki\cite{Prz}. Biquandles \cite{KR}, a generalization of quandles, are special cases of set-theoretic Yang-Baxter operators. Since it is known that their cocycles can be used to define invariants of (virtual) knots and links as a state-sum formulation, called \emph{Yang-Baxter cocycle invariants} \cite{CES}, the homology theory has been modified in various ways to define invariants of knotted objects such as knotted surfaces \cite{KKKL, NR} and handlebody-links \cite{IIKKMO}. Moreover, one can build homotopical link invariants by using geometric realizations of those homology theories. See \cite{IT, WY} for details. The homology of special families of set-theoretic Yang-Baxter operators, known as racks and quandles, has been extensively studied, see \cite{EG, GV, LN, NP, Nos, PY1, PY2, SYan}, but other than for racks and quandles little is known. Therefore, it is important for the study of invariants of knotted objects to determine set-theoretic Yang-Baxter (co)homology groups of biquandles and find explicit formulae of their cocycles.

In this article, we explore the computation of cohomology groups of set-theoretic Yang-Baxter operators, focusing on biquandles. We determine the free parts completely and estimate the torsion parts of the integral set-theoretic Yang-Baxter cohomology groups of finite cyclic biquandles. This partially proves the following conjecture:

\begin{conjecture} \cite{PVY} \label{conjecture}
For the cyclic biquandle $C_{m}$ of order $m,$\\
$$H_{n}^{NYB}(C_{m}) = \left\{
                        \begin{array}{ll}
                          \mathbb{Z}^{(m-1)^{n-1}} \oplus \mathbb{Z}_{m} & \hbox{if $n$ is odd;} \\
                          \mathbb{Z}^{(m-1)^{n-1}} & \hbox{if $n$ is even.}
                        \end{array}
                      \right.
$$
\end{conjecture}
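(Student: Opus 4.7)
The plan is to work directly with the normalized Yang--Baxter chain complex of $C_m$. Since the cyclic biquandle operations on $C_m = \mathbb{Z}/m\mathbb{Z}$ are given by pure shifts in the first argument that do not depend on the second, every face map is a coordinate translation and the boundary $\partial_n$ becomes a signed sum of translations acting on $\mathbb{Z}[C_m^n]$. I would first write $\partial_n$ out on the standard basis, check that the degeneracy subcomplex is preserved by these translations, and thereby obtain a clean presentation of $C^{NYB}_n$.

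The key reduction would be a change of coordinates from $(x_1,\dots,x_n)$ to $(x_1,y_2,\dots,y_n)$ with $y_i = x_i - x_{i-1}$. In these coordinates the translations defining $\partial_n$ affect only the $x_1$-slot while leaving the $y_i$ fixed, and the normalization condition translates into excising precisely the locus where some $y_i$ vanishes. This should yield a model in which $C^{NYB}_n$ has rank $m \cdot (m-1)^{n-1}$, and in which the differential decomposes as a signed sum of deletions twisted by the $C_m$-action on $x_1$. Tensoring with $\mathbb{Q}$ and decomposing by characters of $C_m$, on every nontrivial character $m$ is invertible and the induced differential is acyclic in the relevant range; the trivial-character summand is the subcomplex of $C_m$-invariants, which contributes exactly $(m-1)^{n-1}$ classes in degree $n$. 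Dualizing these invariants and lifting back to $\mathbb{Z}$ should give the explicit cocycle representatives promised in the abstract, written as indicator functions in the coordinates $y_i \in \{1,\dots,m-1\}$.

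The torsion analysis is the main obstacle. For the lower bound one expects a $\mathbb{Z}_m$ summand in odd degrees coming from an explicit cycle supported symmetrically along the $x_1$-orbit which is annihilated only after multiplication by $m$; the parity restriction should fall out of a sign computation in the alternating-sum boundary. For the upper bound one has to control the Smith normal form of the integer differentials in the difference model, and rule out torsion at primes $p \nmid m$ via a universal-coefficient argument, using that the character decomposition above is integrally valid modulo $p$ when $m$ is a unit. Pinning down the torsion exactly, rather than merely bounding it, is what I expect to resist the method and leave the conjecture only partially resolved, consistent with the phrasing of the abstract.
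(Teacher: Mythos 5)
First, note what the target statement is: it is a \emph{conjecture} (from \cite{PVY}) that this paper explicitly does not prove in full --- the paper only determines the free part ($\operatorname{rank}H_n^{NYB}(C_m)=(m-1)^{n-1}$, via Betti numbers of the unnormalized and degenerate complexes and the splitting from \cite{PVY}) and proves that the torsion is annihilated by $m$ ($m$ odd) or $2m$ ($m$ even). Your proposal, by its own admission, also stops short of the conjecture, so the comparison is between your sketch and the paper's partial results. On the free part your plan is close in spirit to the paper's: averaging over the diagonal translation action (the paper's operator $P$ built from the $G_X$-action), identifying the cohomology with the invariant subcomplex on which the differential vanishes, and counting orbits. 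But there is a concrete structural error in your model: in the coordinates $y_i=x_i-x_{i-1}$ the face maps are \emph{not} deletions that leave the remaining $y_i$ fixed. Since $d^l_{i}(x_1,\dots,x_{n})=(x_1-1,\dots,x_{i-1}-1,x_{i+1},\dots,x_{n})$ and $d^r_{i}(x_1,\dots,x_{n})=(x_1,\dots,x_{i-1},x_{i+1}+1,\dots,x_{n}+1)$, deleting $x_i$ replaces the adjacent pair $(y_i,y_{i+1})$ by the single entry $y_i+y_{i+1}+1$ (a bar-complex-like merge), and the left and right faces differ only in the translation ($x_1$) slot. The facts you actually need --- that invariant cochains (those blind to $x_1$) are killed by $\delta$, and that there are $(m-1)^{n-1}$ invariant orbits of nondegenerate tuples --- survive this correction, but your ``signed sum of deletions'' model does not, and it is exactly this model you lean on for the Smith-normal-form torsion analysis. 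Moreover, the acyclicity of the nontrivial-character summands over $\mathbb{Q}$ (equivalently of $C^*_{YB}(1-P)$) is asserted, not proved; the paper supplies precisely this step with an explicit last-coordinate averaging homotopy (its Lemma~\ref{lem}, used in Lemma~\ref{inv}), and without some such homotopy ``$m$ is invertible'' alone does not yield acyclicity.

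On torsion your proposal delivers less than the paper. Ruling out $p$-torsion for $p\nmid m$ via coefficients in which $m$ is invertible is fine in outline (it is the same averaging lemma applied with such coefficients, plus universal coefficients), but the paper's Theorem~\ref{torsion} is stronger and is obtained by a genuinely different, chain-level argument: writing $[f\cdot y]-[f]=[\Delta_y f]$, showing $[\Delta_y f]$ is invariant under the action and $m$-torsion, and summing over the orbit to conclude annihilation by $m$ (odd $m$) or $2m$ (even $m$). Nothing in your sketch produces this bound, and your proposed $\mathbb{Z}_m$ lower bound in odd degrees (an explicit cycle of exact order $m$) is not constructed at all --- indeed the paper does not establish it either, which is exactly why the conjecture remains open. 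So the proposal, as written, neither matches the paper's partial results (the explicit annihilation bound is missing and the key acyclicity lemma is unproved) nor repairs the error in the difference-coordinate model on which its torsion strategy depends.
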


This paper is organized as follows. In Section \ref{section2}, (co)homology theories for a set-theoretic solution of the Yang-Baxter equation are reviewed. Section \ref{section3} contains computations of the set-theoretic Yang-Baxter cohomology groups of finite cyclic biquandles. In Section \ref{section3.1}, we completely calculate the Betti numbers for the set-theoretic Yang-Baxter (co)homology, as well as for the degenerate and normalized versions, of finite cyclic biquandles. Cocycles that form a basis for the rational set-theoretic Yang-Baxter cohomology are also investigated. The torsion parts are discussed in Section \ref{section3.2}. 
We obtain the torsion subgroups of the 1st and 2nd homology groups of finite cyclic biquandles, which signifies that we have fully computed their 1st and 2nd homology groups. For 3rd homology, we compute that $H^{YB}_{3}(C_{3})=\mathbb{Z}^{9}\oplus \mathbb{Z}_{3}.$ We also establish upper bounds for the orders of torsions in their 3rd and higher-dimensional homology groups. Moreover, several non-trivial torsion homology classes are introduced.

\bigskip
\section{Preliminaries} \label{section2}

Let $X$ be a set. A function $R:X \times X \rightarrow X \times X$ is a \emph{set-theoretic solution of the Yang-Baxter equation} or a \emph{set-theoretic pre-Yang-Baxter operator} if it satisfies the following equation, called the \emph{set-theoretic Yang-Baxter equation}
$$(R \times \textrm{Id}_{X}) \circ (\textrm{Id}_{X} \times R) \circ (R \times \textrm{Id}_{X}) = (\textrm{Id}_{X} \times R) \circ (R \times \textrm{Id}_{X}) \circ (\textrm{Id}_{X} \times R).$$
A diagrammatic representation of the set-theoretic Yang-Baxter equation is shown in Figure \ref{YBE}.
The function $R$ is a \emph{set-theoretic Yang-Baxter operator} if it is invertible.

\begin{figure}[h]
\centerline{{\psfig{figure=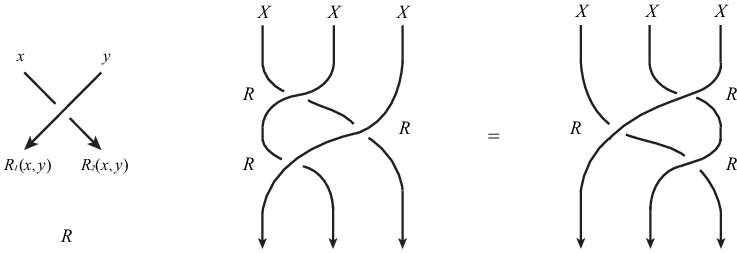,height=4.5cm}}}
\caption{An illustration of the set-theoretic Yang–Baxter equation}
\label{YBE}
\end{figure}

A set-theoretic version of the Yang-Baxter equation can be regarded as a generalization of self-distributivity which corresponds to the third Reidemeister move. There are special families of set-theoretic solutions, known as \emph{biracks} and \emph{biquandles}, that are suitable for
constructing invariants of knotted objects. Their precise definitions are as follows:

\medskip
\begin{definition}
Let $R$ be a set-theoretic Yang-Baxter operator on a given set $X,$ denoted by $$R(A_{1},A_{2})=(R_{1}(A_{1},A_{2}),R_{2}(A_{1},A_{2}))=(A_{3},A_{4}),$$ where $A_{i}\in X$ ($i=1,2,3,4$) and $R_{j}: X \times X \rightarrow X$ ($j=1,2$) are binary operations.\\
The algebraic structure $(X, R_{1}, R_{2})$ is called a \emph{birack} if it satisfies the following:
\begin{itemize}
\item For any $A_{1},A_{3}  \in X,$ there exists a unique $A_{2} \in X$ such that $R_{1}(A_{1},A_{2}) = A_{3}.$\\ In other words, $R_{1}$ is \emph{left-invertible}.

\item For any $A_{2},A_{4} \in  X,$ there exists a unique $A_{1} \in X$ such that $R_{2}(A_{1},A_{2}) = A_{4}.$\\ In other words, $R_{2}$ is \emph{right-invertible}.
\end{itemize}
A \emph{biquandle} is a birack which also satisfies the following condition:
\begin{itemize}
\item For any $A_{1} \in X,$ there is a unique $A_{2} \in X$ such that $R(A_{1},A_{2})=(A_{1},A_{2}).$\footnote{It implies that for any $A_{2} \in X,$ there is a unique $A_{1} \in X$ such that $R(A_{1},A_{2})=(A_{1},A_{2}).$}
\end{itemize}
\end{definition}
\medskip

The following are basic examples of biquandles.

\medskip
\begin{example}
\begin{enumerate}
\item Let $C_{m}$ be the cyclic group of order $m.$ The map $R:C_{m} \times C_{m} \rightarrow C_{m} \times C_{m}$ defined by $$R(a,b)=(R_{1}(a,b), R_{2}(a,b))=(b+1, a-1)$$ is a set-theoretic Yang-Baxter operator and, moreover, $(C_{m}, R_{1}, R_{2})$ forms a biquandle, called the \emph{cyclic biquandle} of order $m.$
\item \cite{CES} Let $k$ be a commutative ring with unity $1$ and with units $s$ and $t$ such that $(1-s)(1-t)=0$.
    The function $R:k \times k \rightarrow k \times k$ given by $$R(x,y)=(R_{1}(x,y), R_{2}(x,y))=((1-s)x+sy,tx+(1-t)y)$$ is a set-theoretic Yang-Baxter operator, and $(k, R_{1}, R_{2})$ forms a biquandle, called an \emph{Alexander biquandle}.\\
    For example, if we let $k=C_{m}$ with units $s$ and $t$ such that $m=|(1-s)(1-t)|,$ the function $R$ defined as above forms a set-theoretic Yang-Baxter operator and $(C_{m},R_{1},R_{2})$ is a biquandle.
\end{enumerate}
\end{example}

Let us review the cohomology theory for the set-theoretic Yang-Baxter equation based on \cite{CES, WY}. Let $R:X \times X \rightarrow X \times X$ be a set-theoretic Yang-Baxter operator on a given set $X.$ For each integer $n>0,$ let $C_{n}^{YB}(X)$ be the free abelian group generated by the elements of $X^{n},$ otherwise we let $C_{n}^{YB}(X)=0.$ For a fixed abelian group $A,$ we define the cochain complex $(C^{n}_{YB}(X;A), \delta^{n}),$ where $C^{n}_{YB}(X;A)=\text{Hom}(C_{n}^{YB}(X), A)$ with the $n$th boundary homomorphism
$$(\delta^{n}f)(x_{1},\ldots,x_{n+1})=\sum_{i=1}^{n+1} (-1)^{i-1} \Big\{ (d^{i,n+1}_{l}f)(x_{1},\ldots,x_{n+1})-(d^{i,n+1}_{r}f)(x_{1},\ldots,x_{n+1}) \Big\},$$
where $d^{i,n+1}_{l}, d^{i,n+1}_{r}$ are the dual homomorphisms of the face maps\footnote{The face maps can be illustrated more intuitively as shown in Figure \ref{YBfacemap}. See \cite{Leb, Prz} for further details.} $d_{i,n+1}^{l},d_{i,n+1}^{r}:C_{n+1}^{YB}(X) \rightarrow C_{n}^{YB}(X),$ respectively, given by
$$d_{i,n+1}^{l} = (R_{2} \times \textrm{Id}_{X}^{\times (n-1)}) \circ (\textrm{Id}_{X} \times R \times \textrm{Id}_{X}^{\times (n-2)}) \circ \cdots \circ (\textrm{Id}_{X}^{\times (i-2)} \times R \times \textrm{Id}_{X}^{\times (n-i+1)}),$$
$$d_{i,n+1}^{r} = (\textrm{Id}_{X}^{\times (n-1)} \times R_{1}) \circ (\textrm{Id}_{X}^{\times (n-2)} \times R \times \textrm{Id}_{X}) \circ \cdots \circ (\textrm{Id}_{X}^{\times (i-1)} \times R \times \textrm{Id}_{X}^{\times (n-i)}).$$

\begin{figure}[h]
\centerline{{\psfig{figure=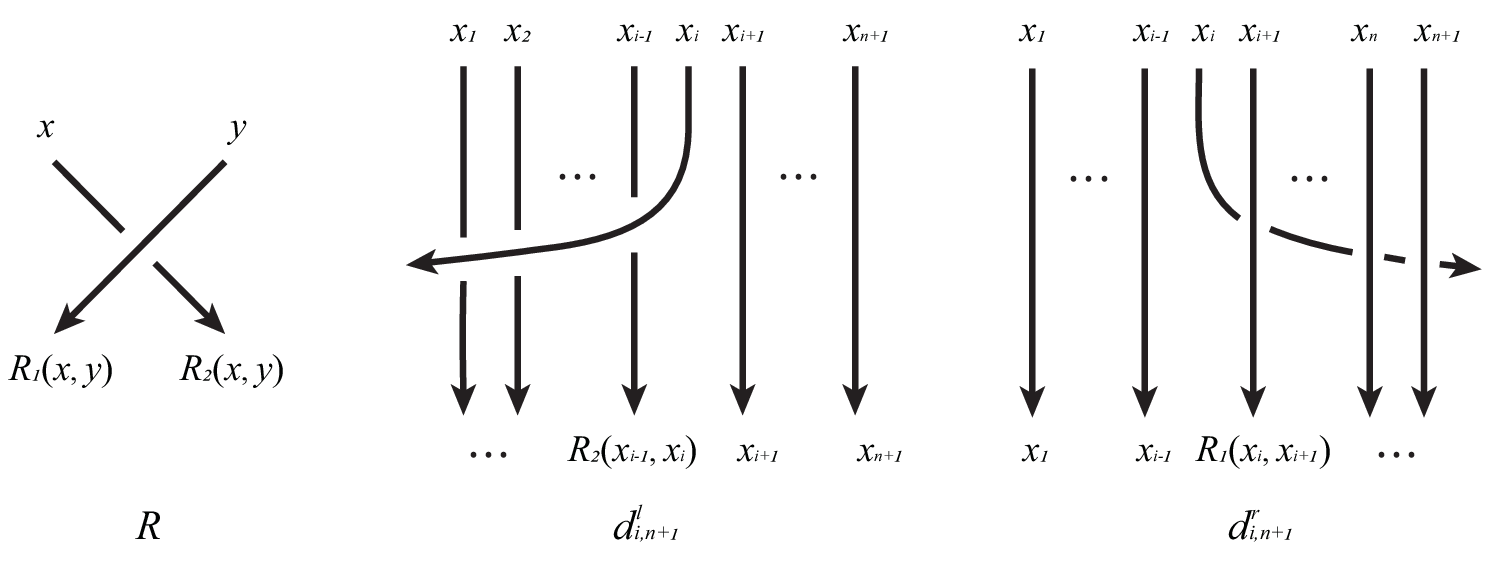,height=5.2cm}}}
\caption{A diagrammatic description of the face maps $d_{i,n+1}^{l}$ and $d_{i,n+1}^{r}$}
\label{YBfacemap}
\end{figure}

\begin{definition}
The cohomology group $H^{n}_{YB}(X;A)=H^{n}(C^{*}_{YB}(X;A))$ is called the $n$th \emph{set-theoretic Yang-Baxter cohomology group} of $X$ with coefficient in $A$ (cf. \cite{KKKL}).
\end{definition}

Let $C_{n}^{D}(X)$ be the subgroup of $C_{n}^{YB}(X)$ given by
$$C_{n}^{D}(X)=\text{Span}\{(x_{1},\ldots,x_{n})\in C_{n}^{YB}(X)~|~ R(x_{i},x_{i+1})=(x_{i},x_{i+1}) \text{~for some~} i=1,\ldots,n-1\},$$
if $n \geq 2.$ Otherwise, we let $C_{n}^{D}(X)=0.$ Then $C_{*}^{D}(X) = (C_{n}^{D}(X), \partial_{n})$ forms a chain subcomplex of the chain complex $C_{*}^{YB}(X) = (C_{n}^{YB}(X), \partial_{n}),$ where $\partial_{n}=\sum_{i=1}^{n} (-1)^{i-1}(d_{i,n}^{l} - d_{i,n}^{r}).$ Moreover, one can construct the quotient chain complex $C_{*}^{NYB}(X)=C_{*}^{YB}(X) \big/ C_{*}^{D}(X)$ consisting of $C_{n}^{NYB}(X)=C_{n}^{YB}(X) \big/ C_{n}^{D}(X)$ together with the maps $\overline{\partial}_{n} : C_{n}^{YB}(X) \big/ C_{n}^{D}(X) \rightarrow C_{n-1}^{YB}(X) \big/ C_{n-1}^{D}(X)$ uniquely determined by the universal property of the quotient. We define the cochain groups by $C^{n}_{D}(X;A)= \text{Hom}(C_{n}^{D}(X), A)$ and $C^{n}_{NYB}(X;A)= \text{Hom}(C_{n}^{NYB}(X), A).$

\begin{definition}
\begin{enumerate}
  \item The homology group $H_{n}^{YB}(X;A)=H_{n}(C_{*}^{YB}(X;A))$ is called the $n$th \emph{set-theoretic Yang-Baxter homology group} of $X$ with coefficient in $A.$
  \item The $n$th \emph{degenerate set-theoretic Yang-Baxter homology and cohomology groups} of $X$ with coefficient in $A$ are $H_{n}^{D}(X;A)=H_{n}(C_{*}^{D}(X;A))$ and $H^{n}_{D}(X;A)=H^{n}(C^{*}_{D}(X;A)).$
  \item The $n$th \emph{normalized set-theoretic Yang-Baxter homology and cohomology groups} of $X$ with coefficient in $A$ are $H_{n}^{NYB}(X;A)=H_{n}(C_{*}^{NYB}(X;A))$ and $H^{n}_{NYB}(X;A)=H^{n}(C^{*}_{NYB}(X;A)).$
\end{enumerate}
\end{definition}

\bigskip
\section{Cohomology of finite cyclic biquandles} \label{section3}

In the following subsections, we present a method of calculating the rank of the free part and study annihilation of torsion in the set-theoretic Yang-Baxter cohomology of cyclic biquandles.

\bigskip
\subsection{The Betti number of a finite cyclic biquandle} \label{section3.1}~\\

\begin{definition}
For a biquandle $X:=(X, R_{1}, R_{2}),$ the group
$$G_{X}=\langle X ~|~ xy = R_{1}(x,y)R_{2}(x,y) \text{~for all~} x,y \in X \rangle$$
is called the \emph{structure group}\footnote{The structure group of a quandle was discussed in \cite{Joy}, and the structure group of a braided set, which can be regarded as its generalization, was studied in \cite{Sol}.} of $X.$
\end{definition}

\medskip
Let $X$ be a biquandle. Then $C^{n}_{YB}(X;A)$ forms a right $G_{X}$-module, with the action defined on the generators by
$$(f\cdot y)(x_{1},\ldots,x_{n})=f\circ d_{n+1,n+1}^{l}(x_{1},\ldots,x_{n},y).$$

\medskip
\begin{proposition} \label{prop}
Suppose that a biquandle $X$ satisfies the following property:
\begin{equation} \label{property} \tag{I}
f\cdot y = f\cdot R_{1}(a,y) \text{~for all~} f \in C^{n}_{YB}(X;A) \text{~and for all~} a,y \in X.
\end{equation}
Then the coboundary homomorphism $\delta^{n}:C^{n}_{YB}(X;A) \rightarrow C^{n+1}_{YB}(X;A)$ is a $G_{X}$-module homomorphism.
\end{proposition}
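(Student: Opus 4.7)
To establish that $\delta^n$ is a $G_X$-module homomorphism, I will verify the identity $\delta^n(f\cdot y)=(\delta^n f)\cdot y$ for every cochain $f\in C^n_{YB}(X;A)$ and every generator $y\in X$, by evaluating both sides at an arbitrary tuple $\mathbf{x}=(x_1,\ldots,x_{n+1})\in X^{n+1}$. Unpacking the coboundary and the action yields
\begin{align*}
((\delta^n f)\cdot y)(\mathbf{x}) &= \sum_{i=1}^{n+1}(-1)^{i-1}\bigl[f\circ d^l_{i,n+1}\circ d^l_{n+2,n+2}(\mathbf{x},y)-f\circ d^r_{i,n+1}\circ d^l_{n+2,n+2}(\mathbf{x},y)\bigr],\\
\delta^n(f\cdot y)(\mathbf{x}) &= \sum_{i=1}^{n+1}(-1)^{i-1}\bigl[f\circ d^l_{n+1,n+1}\bigl(d^l_{i,n+1}(\mathbf{x}),y\bigr)-f\circ d^l_{n+1,n+1}\bigl(d^r_{i,n+1}(\mathbf{x}),y\bigr)\bigr].
\end{align*}
The plan is to establish a term-by-term equality, showing that $f$ returns the same value on $d^\varepsilon_{i,n+1}\circ d^l_{n+2,n+2}(\mathbf{x},y)$ and on $d^l_{n+1,n+1}(d^\varepsilon_{i,n+1}(\mathbf{x}),y)$ for each $i\in\{1,\ldots,n+1\}$ and each $\varepsilon\in\{l,r\}$.

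For the $\varepsilon=l$ case, since the composition defining $d^l_{i,n+2}$ involves only $R$-factors at positions $(j,j+1)$ with $j\le i-1\le n$, the slot holding $y$ is never touched, so $d^l_{i,n+2}(\mathbf{x},y)=(d^l_{i,n+1}(\mathbf{x}),y)$. Iterating the Yang-Baxter equation to pass the successive $R$'s in $d^l_{i,n+1}\circ d^l_{n+2,n+2}$ through those in $d^l_{n+1,n+1}\circ d^l_{i,n+2}$ reduces the comparison to a chain of braid moves; where the resulting elements disagree, property (I), rephrased at the element level as $d^l_{n+1,n+1}(\mathbf{v},z)=d^l_{n+1,n+1}(\mathbf{v},R_1(a,z))$, absorbs the remaining discrepancy by allowing the last coordinate to be replaced modulo the $R_1$-action. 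The $\varepsilon=r$ case proceeds similarly, with one extra twist: because $d^r_{i,n+2}$ does touch the final slot through its terminal factor $\mathrm{Id}_X^{\times n}\times R_1$, its last entry has the form $R_1(A,B)$ rather than $y$, so property (I) is needed first to strip off this outer $R_1$ before the Yang-Baxter reorganization can finish the identification.

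The main obstacle I anticipate is the careful bookkeeping in the $\varepsilon=r$ case: tracing how the auxiliary elements generated by the iterated $R$'s propagate through the two compositions, identifying the exact position where property (I) must be applied, and ensuring that the remaining YBE moves yield the desired match. The rest—expanding the coboundary, assembling signed sums, and passing from the term-by-term identities to the full equality—is routine, and relies on no structure beyond the set-theoretic Yang-Baxter equation and property (I).
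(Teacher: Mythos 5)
Your proposal follows essentially the same route as the paper: evaluate $\delta^n(f\cdot y)$ and $(\delta^n f)\cdot y$ on a tuple, reduce to the term-by-term identities $(f\cdot y)\circ d^{\varepsilon}_{i,n+1}=(f\circ d^{\varepsilon}_{i,n+1})\circ d^{l}_{n+2,n+2}(\,\cdot\,,y)$ for $\varepsilon\in\{l,r\}$, and verify these via the Yang-Baxter equation together with property (I) — exactly what the paper asserts and justifies schematically in its figure, with your analysis of the $\varepsilon=r$ terminal factor $\mathrm{Id}_X^{\times n}\times R_1$ correctly locating where (I) is used. One small refinement: in the $\varepsilon=l$ case no discrepancy actually arises, since $d^{l}_{i,n+2}(\mathbf{x},y)=(d^{l}_{i,n+1}(\mathbf{x}),y)$ and the pre-simplicial identity $d^{l}_{i,n+1}\circ d^{l}_{n+2,n+2}=d^{l}_{n+1,n+1}\circ d^{l}_{i,n+2}$ follows from the YBE alone, so property (I) is needed only for the right-face terms.
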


\begin{proof}
Let $y$ be a generator of $G_{X},$ and let $f \in C^{n}_{YB}(X;A).$\\
We will show that $\delta^{n}(f \cdot y) = \delta^{n}(f) \cdot y.$\\
Note that
$$\delta^{n}(f \cdot y)(x_{1},\ldots,x_{n+1}) = \sum\limits_{i=1}^{n+1} (-1)^{i-1} \Big\{ ((f \cdot y) \circ d_{i,n+1}^{l}-(f \cdot y) \circ d_{i,n+1}^{r})(x_{1},\ldots,x_{n+1}) \Big\},$$
$$(\delta^{n}(f) \cdot y)(x_{1},\ldots,x_{n+1}) = \sum\limits_{i=1}^{n+1} (-1)^{i-1} \Big\{ (f \circ d_{i,n+1}^{l}-f \circ d_{i,n+1}^{r}) (d_{n+2,n+2}^{l}(x_{1},\ldots,x_{n+1},y)) \Big\}.$$
Since $X$ satisfies the set-theoretic Yang-Baxter equation and the property (\ref{property}), we have
$$(f \cdot y) \circ d_{i,n+1}^{l}(x_{1},\ldots,x_{n+1}) = (f \circ d_{i,n+1}^{l}) \circ d_{n+2,n+2}^{l}(x_{1},\ldots,x_{n+1},y) \text{~and~}$$
$$(f \cdot y) \circ d_{i,n+1}^{r}(x_{1},\ldots,x_{n+1}) = (f \circ d_{i,n+1}^{r}) \circ d_{n+2,n+2}^{l}(x_{1},\ldots,x_{n+1},y),$$ as desired. The two equations above can be more easily understood by the schematic explanations in Figure \ref{GRmodule}.
\end{proof}

\medskip
\begin{remark} \label{rem1}
All cyclic biquandles and Alexander biquandles satisfy the property (\ref{property}).
\end{remark}

\begin{figure}[h]
\centerline{{\psfig{figure=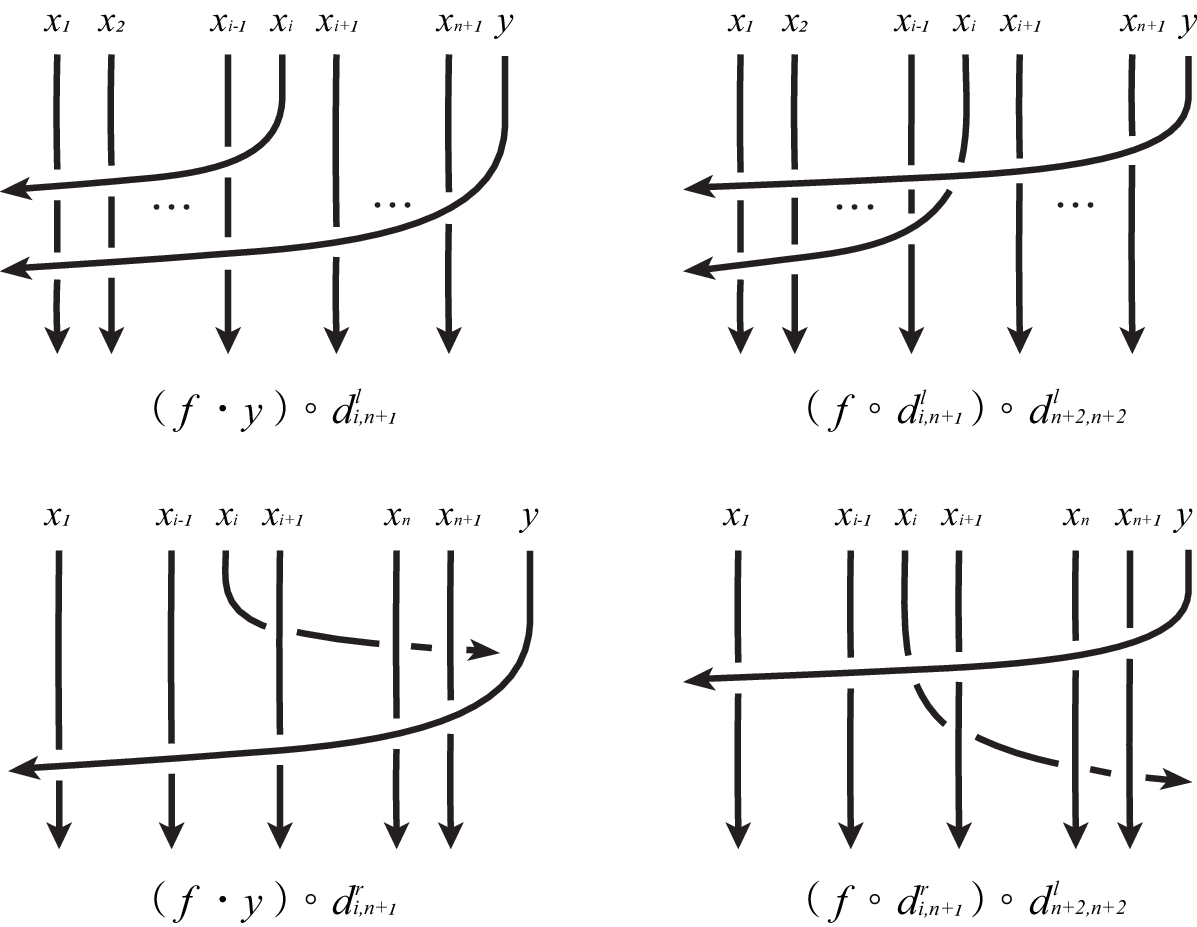,height=7.5cm}}}
\caption{$\delta^{n}(f \cdot y) = \delta^{n}(f) \cdot y$}
\label{GRmodule}
\end{figure}

\medskip

From now on, we assume that every biquandle $X$ is finite and the multiplication by $|X|$ is an automorphism of $A.$

\medskip
\begin{lemma} \label{lem}
If a biquandle $X$ satisfies the property (\ref{property}), then $[f]=[f\cdot (\frac{1}{|X|}\sum\limits_{y\in X}y)]$ in $H^{n}_{YB}(X;A).$
\end{lemma}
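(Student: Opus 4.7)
By linearity of the action, $f\cdot\bigl(\tfrac{1}{|X|}\sum_{y\in X}y\bigr)=\tfrac{1}{|X|}\sum_{y\in X}f\cdot y$; denote this ``averaged'' cochain by $E(f)$. Proposition \ref{prop} (applied to each summand) guarantees that $E$ is a cochain map. The goal is therefore to show that $E$ induces the identity on $H^{n}_{YB}(X;A)$, equivalently that $f-E(f)$ is a coboundary for every cocycle $f\in C^{n}_{YB}(X;A)$.

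The natural strategy is to construct an explicit cochain homotopy. The first candidate I would try is the ``integration over the last strand'' operator
\[T_{n}\colon C^{n}_{YB}(X;A)\longrightarrow C^{n-1}_{YB}(X;A),\qquad T_{n}(f)(x_{1},\ldots,x_{n-1})\;=\;\frac{(-1)^{n}}{|X|}\sum_{y\in X}f(x_{1},\ldots,x_{n-1},y),\]
and the target identity is $\delta^{n-1}T_{n}(f)+T_{n+1}\delta^{n}(f)=f-E(f)$. Evaluated on a cocycle $f$, this collapses to $\delta^{n-1}T_{n}(f)=f-E(f)$, exhibiting the required coboundary.

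To verify this identity, I would expand both sides via the coboundary formula. The $d^{l}$-part is relatively clean: since $d^{l}_{i,n+1}$ only interacts with positions $1,\ldots,i$, one has $d^{l}_{i,n+1}(x_{1},\ldots,x_{n},y)=(d^{l}_{i,n}(x_{1},\ldots,x_{n}),y)$ for $i\leq n$, and the sign choice $(-1)^{n}$ makes the $d^{l}$-contributions with $i\leq n$ cancel in pairs between $\delta T_{n}$ and $T_{n+1}\delta$. The surviving $i=n+1$ piece of $T_{n+1}\delta$ is precisely $-E(f)$, by the very definition $(f\cdot y)(x_{1},\ldots,x_{n})=f\circ d^{l}_{n+1,n+1}(x_{1},\ldots,x_{n},y)$.

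The hard part is the $d^{r}$-contributions: here the last coordinate $y$ enters $d^{r}_{i,n+1}$ non-trivially, so no naive identification between $d^{r}_{i,n+1}(x_{1},\ldots,x_{n},y)$ and $(d^{r}_{i,n}(x_{1},\ldots,x_{n}),y)$ is available. This is where property \ref{property} becomes indispensable, together with the left-invertibility of $R_{1}$: the bijection $y\mapsto R_{1}(a,y)$ of $X$ lets one reindex the summation $\tfrac{1}{|X|}\sum_{y}$ so that the $d^{r}$-contributions telescope, leaving exactly the residual $+f(x_{1},\ldots,x_{n})$ needed to complete the identity. Carrying out this reindexing—potentially requiring a slight refinement of $T_{n}$ or an iterated averaging argument—is where the main technical effort lies and is the principal obstacle.
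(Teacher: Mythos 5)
Your plan is in substance the paper's own proof: your $T_{n}$ is, up to the sign $(-1)^{n}$, exactly the cochain $f'(x_{1},\ldots,x_{n-1})=\frac{1}{|X|}\sum_{y\in X}f(x_{1},\ldots,x_{n-1},y)$ used there, and the identity $\delta^{n-1}T_{n}(f)=f-E(f)$ on cocycles is what the paper verifies. The one place you stop short is the $d^{r}$-analysis, which you flag as ``the principal obstacle'' possibly requiring a refined $T_{n}$ or iterated averaging. It is not an obstacle, and no refinement is needed: for $i\leq n$ one has $d^{r}_{i,n+1}(x_{1},\ldots,x_{n},y)=\bigl(d^{r}_{i,n}(x_{1},\ldots,x_{n}),\,R_{1}(w,y)\bigr)$, where $w$ is the element produced from $x_{i},\ldots,x_{n}$ by the successive $R_{2}$'s in the definition of the right face map, and crucially $w$ does not depend on $y$. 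Since $y\mapsto R_{1}(w,y)$ is a bijection of $X$ by left-invertibility (a birack axiom), the average over $y$ reindexes term by term, giving $\frac{1}{|X|}\sum_{y}f\circ d^{r}_{i,n+1}(x,y)=f'\circ d^{r}_{i,n}(x)$; these cancel exactly against the corresponding right-face terms of $\delta^{n-1}T_{n}(f)$, with no telescoping. The residual $+f(x_{1},\ldots,x_{n})$ comes not from that cancellation but from the $i=n+1$ right face, because $d^{r}_{n+1,n+1}$ simply deletes the last coordinate, while the $i=n+1$ left face gives $-E(f)$ via the definition of the action, exactly as you say. Two small corrections of attribution: it is left-invertibility, not property \ref{property}, that powers the reindexing (property \ref{property} sits in the hypothesis and is what makes the $G_{X}$-action and Proposition \ref{prop} available, but the coboundary computation itself does not invoke it); and your sign bookkeeping $\delta^{n-1}T_{n}+T_{n+1}\delta^{n}=\mathrm{Id}-E$ is consistent with the paper's computation, so once the reindexing observation is inserted your argument closes and coincides with the published one.
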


\begin{proof}
Let $f \in Z_{YB}^{n}(X ; A)$ be an $n$-cocycle. We consider $f'\in C^{n-1}_{YB}(X;A)$ defined by
$$f'(x_{1},\ldots,x_{n-1})=\frac{1}{|X|}\sum\limits_{y\in X}f(x_{1},\ldots,x_{n-1},y).$$
Note that $(f' \circ d_{i,n}^{r})(x_{1},\ldots,x_{n}) = \frac{1}{|X|}\sum\limits_{y\in X}(f \circ d_{i,n+1}^{r})(x_{1},\ldots,x_{n},y)$ by the left-invertibility condition of a biquandle. Then we have
\begin{flalign*}
(\delta^{n-1}f')(x_{1},\ldots,x_{n})
&=\sum\limits_{i=1}^{n} (-1)^{i-1} \Big\{ (f' \circ d_{i,n}^{l})(x_{1},\ldots,x_{n})-(f' \circ d_{i,n}^{r})(x_{1},\ldots,x_{n}) \Big\}\\
&=\sum\limits_{i=1}^{n} (-1)^{i-1} \frac{1}{|X|}\sum\limits_{y\in X} \Big\{ (f \circ d_{i,n+1}^{l})(x_{1},\ldots,x_{n},y)-(f \circ d_{i,n+1}^{r})(x_{1},\ldots,x_{n},y)\Big\}\\
&=\sum\limits_{i=1}^{n+1} (-1)^{i-1} \frac{1}{|X|}\sum\limits_{y\in X} \Big\{ (f \circ d_{i,n+1}^{l})(x_{1},\ldots,x_{n},y)-(f \circ d_{i,n+1}^{r})(x_{1},\ldots,x_{n},y)\Big\}\\
&~ -(-1)^{n}\frac{1}{|X|}\sum\limits_{y\in X} \Big\{ (f \circ d_{n+1,n+1}^{l})(x_{1},\ldots,x_{n},y)-(f \circ d_{n+1,n+1}^{r})(x_{1},\ldots,x_{n},y)\Big\}\\
&=\frac{1}{|X|}\sum\limits_{y\in X}(\delta^{n}f)(x_{1},\ldots,x_{n},y) - (-1)^{n}\frac{1}{|X|}\sum\limits_{y\in X} \Big\{(f \cdot y)(x_{1},\ldots,x_{n}) - f(x_{1},\ldots,x_{n}) \Big\}\\
&=(-1)^{n+1} \Big\{ \frac{1}{|X|}\sum\limits_{y\in X} (f \cdot y)(x_{1},\ldots,x_{n}) - f(x_{1},\ldots,x_{n}) \Big\}
\end{flalign*}
as desired.
\end{proof}
\medskip

Now let us assume that a biquandle $X$ satisfies the property (\ref{property}). Then we can consider the chain subcomplex $C^{*}_{\textrm{inv}}(X;A):=C^{*}_{YB}(X;A)^{G_{X}}$ of the cochain complex $C^{*}_{YB}(X;A)$ by Proposition \ref{prop}, and the yielded cohomology $H^{n}_{\textrm{inv}}(X;A):=H^{n}(C^{*}_{\textrm{inv}}(X;A)).$

\medskip
\begin{lemma}\label{inv}
Let $C_{m}$ be the cyclic biquandle of order $m.$ Then $H^{*}_{\textrm{inv}}(C_{m};A)=H^{*}_{YB}(C_{m};A).$
\end{lemma}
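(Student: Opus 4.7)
The plan is to show that the inclusion $\iota : C^{*}_{\textrm{inv}}(C_{m};A)\hookrightarrow C^{*}_{YB}(C_{m};A)$ is a cochain-homotopy equivalence. My first step would be to unfold the composition defining $d^{l}_{n+1,n+1}$ using the cyclic biquandle operator $R(a,b)=(b+1,a-1)$. Moving the last entry $y$ leftward through the other coordinates contributes only shifts by $\pm 1$, and the final application of $R_{2}$ at positions $(1,2)$ absorbs the $y$-dependence entirely, yielding $d^{l}_{n+1,n+1}(x_{1},\ldots,x_{n},y)=(x_{1}-1,\ldots,x_{n}-1)$ when $n\ge 2$. Consequently $(f\cdot y)(x_{1},\ldots,x_{n})=f(x_{1}-1,\ldots,x_{n}-1)$ is a single shift $T(f)$ that does not depend on $y$; since $T^{m}=\textrm{Id}$ in $C_{m}$, the $G_{C_{m}}$-action on $C^{n}_{YB}(C_{m};A)$ factors through the finite cyclic group $\langle T\rangle\cong\mathbb{Z}/m$, and $C^{n}_{\textrm{inv}}(C_{m};A)$ is precisely the $T$-fixed subspace. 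The degenerate case $n=1$ would be checked separately by direct computation.

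Because $m=|C_{m}|$ is invertible in $A$, I then define the averaging operator $P:=\frac{1}{m}\sum_{k=0}^{m-1}T^{k}$. By Proposition~\ref{prop} each $T^{k}$ is a cochain map, hence so is $P$; reindexing gives $T\circ P=P$, so $\textrm{im}(P)\subseteq C^{*}_{\textrm{inv}}(C_{m};A)$; and on an already $T$-invariant cochain every summand equals the identity, so $P\circ\iota=\textrm{Id}$. Thus $P$ is a cochain retraction of $\iota$.

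To finish, I would promote the cohomology-level identity $[T(f)]=[f]$ of Lemma~\ref{lem} into an honest cochain homotopy $T-\textrm{Id}\simeq 0$. The computation in the proof of Lemma~\ref{lem}, read without invoking the cocycle condition on $f$, already realises the difference $T(f)-f$ as $\delta^{n-1}f'$ plus terms involving $\delta^{n}f$, which is precisely the defining relation of a cochain homotopy. Because $T$ is itself a cochain map, the telescoping identity $T^{k}-\textrm{Id}=(T-\textrm{Id})(\textrm{Id}+T+\cdots+T^{k-1})$ propagates the homotopy to $T^{k}-\textrm{Id}\simeq 0$ for every $k$, and averaging yields $P-\textrm{Id}\simeq 0$, i.e.\ $\iota\circ P\simeq\textrm{Id}$. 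Combined with $P\circ\iota=\textrm{Id}$ this exhibits $\iota$ as a cochain-homotopy equivalence, whence $H^{*}_{\textrm{inv}}(C_{m};A)\cong H^{*}_{YB}(C_{m};A)$.

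The main obstacle is the opening face-map calculation: one has to verify carefully that the leftmost $R_{2}$ wipes out all dependence on $y$, collapsing the structure group $G_{C_{m}}$ to a finite cyclic quotient on which Maschke-style averaging is available. The remainder is a standard homotopy transfer once one extracts an honest cochain-level homotopy from the proof of Lemma~\ref{lem} rather than merely its cohomology-level consequence.
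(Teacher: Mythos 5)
Your proposal is correct and follows essentially the same route as the paper: you identify that the $G_{C_m}$-action of every generator is the single shift $T(f)(x_1,\ldots,x_n)=f(x_1-1,\ldots,x_n-1)$, introduce the same averaging idempotent $P=\frac{1}{m}\sum_{k=0}^{m-1}T^{k}$, and use the computation underlying Lemma~\ref{lem} to kill the difference between $P$ and the identity. The only (harmless) repackaging is that you upgrade Lemma~\ref{lem} to an explicit cochain homotopy $T\simeq\mathrm{Id}$ and conclude that the inclusion is a deformation retract, whereas the paper splits $C^{*}_{YB}(C_m;A)=C^{*}_{\mathrm{inv}}(C_m;A)\oplus C^{*}_{YB}(C_m;A)(1-P)$ and checks acyclicity of the complement at the level of cohomology classes.
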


\begin{proof}
Let $g=\sum\limits_{y\in C_{m}}y.$ By Remark \ref{rem1} and Lemma \ref{lem}, $[f] = [f\cdot (\frac{g}{m})^{k}]$ in $H^{*}_{YB}(C_{m};A)$ for every positive integer $k.$
Consider $P=\frac{1}{m}\sum\limits_{i=0}^{m-1}(\frac{1}{m^{i}}g^{i}).$ Note that for any $(a,b) \in C_{m} \times C_{m},$ $R_{2}(a,b)=a-1,$ i.e., the value of $R_{2}$ does not depend on the choice of $b.$ Then for each $i$ we have $(f\cdot (\frac{1}{m^{i}} g^{i}))(x_{1},\ldots,x_{n})=f(x_{1}-i,\ldots,x_{n}-i),$ and therefore,
$$(f\cdot P)(x_{1},\ldots,x_{n}) = \frac{1}{m} \sum\limits_{i=0}^{m-1} f(x_{1}-i,\ldots,x_{n}-i).$$
Then for any $z \in C_{m},$ $(f\cdot Pz)(x_{1},\ldots,x_{n}) = \frac{1}{m} \sum\limits_{i=0}^{m-1} f(x_{1}-i-1,\ldots,x_{n}-i-1) = (f\cdot P)(x_{1},\ldots,x_{n}).$
Thus, $P$ projects the elements of $C^{*}_{YB}(C_{m};A)$ onto $C^{*}_{\textrm{inv}}(C_{m};A).$ Moreover, since $P$ commutes with the boundary homomorphisms,
$$C^{*}_{YB}(C_{m};A) = C^{*}_{\textrm{inv}}(C_{m};A) \oplus C^{*}_{YB}(C_{m};A)(1-P).$$
Note that [$f \cdot P^{2}] = [(f \cdot P) \cdot (\frac{1}{m}\sum\limits_{i=0}^{m-1}(\frac{1}{m^{i}}g^{i}))] = [\frac{1}{m}\sum\limits_{i=0}^{m-1}(f \cdot P)] = [f \cdot P]$ in $H^{*}_{YB}(C_{m};A).$\\
Now we let $[r] \in H^{n}(C^{*}_{YB}(C_{m};A)(1-P)).$ Then $[r]=[r \cdot P]$ by Lemma \ref{lem}.\\
On the other hand, there exists $s \in C^{*}_{YB}(C_{m};A)$ such that $[r] = [s - s \cdot P].$ Then
$$[r] = [r \cdot P] = [s \cdot P - s \cdot P^{2}] = [s \cdot P] - [s \cdot P] = 0.$$
Hence, $C^{*}_{YB}(C_{m};A)(1-P)$ is acyclic, as desired.
\end{proof}
\medskip

\begin{theorem}\label{m^{n-1}}
The $n$th Betti number of the cyclic biquandle $C_{m}$ of order $m$ is $m^{n-1}.$
\end{theorem}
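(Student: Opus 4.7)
The plan is to combine Lemma~\ref{inv} with a direct analysis of the invariant subcomplex. Working over $\mathbb{Q}$, where $m = |C_m|$ is invertible, the $n$th Betti number equals $\dim_{\mathbb{Q}} H^n_{YB}(C_m;\mathbb{Q})$, which by Lemma~\ref{inv} coincides with $\dim_{\mathbb{Q}} H^n_{\textrm{inv}}(C_m;\mathbb{Q})$. As observed in the proof of Lemma~\ref{inv}, because $R_2(a,b) = a-1$ does not depend on $b$, the action of any generator $y \in C_m$ on $C^n_{YB}(C_m;A)$ is the diagonal shift $(f \cdot y)(x_1, \ldots, x_n) = f(x_1 - 1, \ldots, x_n - 1)$, independent of $y$. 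Consequently $C^n_{\textrm{inv}}(C_m;A)$ is exactly the subcomplex of cochains invariant under the diagonal $C_m$-action on $C_m^n$; since this action is free, $C^n_{\textrm{inv}}(C_m;\mathbb{Q})$ is a vector space of dimension $m^{n-1}$, parametrized by the $n-1$ difference coordinates $x_{j+1} - x_j$ ($j = 1, \ldots, n-1$).

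The remaining step is to show that the coboundary $\delta^n$ vanishes on $C^n_{\textrm{inv}}(C_m;\mathbb{Q})$. First I would compute $d_{i,n+1}^{l}$ and $d_{i,n+1}^{r}$ explicitly on the cyclic biquandle by unwinding the nested $R$-moves; since each crossing only contributes a $\pm 1$ shift, a direct induction yields $n$-tuples of the form
\[
d_{i,n+1}^{l}(x_1,\ldots,x_{n+1}) = (x_1 - 1, \ldots, x_{i-1} - 1, x_{i+1}, \ldots, x_{n+1}),
\]
\[
d_{i,n+1}^{r}(x_1,\ldots,x_{n+1}) = (x_1, \ldots, x_{i-1}, x_{i+1} + 1, \ldots, x_{n+1} + 1),
\]
so that $d_{i,n+1}^{r}(x_1,\ldots,x_{n+1}) - d_{i,n+1}^{l}(x_1,\ldots,x_{n+1}) = (1, 1, \ldots, 1)$ for every $i = 1, \ldots, n+1$. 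For a translation-invariant $f$ this identity forces $f \circ d_{i,n+1}^{l} = f \circ d_{i,n+1}^{r}$ for every $i$, and every summand in the alternating-sum expression for $\delta^n f$ cancels individually. Hence $H^n_{\textrm{inv}}(C_m;\mathbb{Q}) = C^n_{\textrm{inv}}(C_m;\mathbb{Q}) \cong \mathbb{Q}^{m^{n-1}}$, and the $n$th Betti number is $m^{n-1}$.

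The principal technical step is the bookkeeping in deriving the closed forms for $d_{i,n+1}^{l}$ and $d_{i,n+1}^{r}$: each of the nested crossings simultaneously modifies two adjacent entries, and one must verify that the $\pm 1$ shifts telescope to the simple expressions above. Once these formulas are established, both the dimension count for $C^n_{\textrm{inv}}$ and the vanishing of $\delta^n$ on it follow by direct inspection, and the Betti number computation is complete.
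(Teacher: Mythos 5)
Your proposal is correct and follows essentially the same route as the paper: reduce to the invariant subcomplex via Lemma~\ref{inv}, count the diagonal-shift orbits to get $\dim C^n_{\textrm{inv}}(C_m;\mathbb{Q}) = m^{n-1}$, and observe that $d^{l}_{i,n+1}$ and $d^{r}_{i,n+1}$ land in the same orbit (their outputs differ by the all-ones shift), so $\delta^n$ vanishes on invariant cochains. The explicit face-map formulas you derive are exactly the ones the paper uses implicitly in its computation, so nothing further is needed.
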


\begin{proof}
We take $A$ to be $\mathbb{Q}.$ Then the $n$th Betti number of $C_{m},$ i.e., $\textrm{dim}H^{n}_{YB}(C_{m};\mathbb{Q}),$ is equal to $\textrm{dim}H^{n}_{\textrm{inv}}(C_{m};\mathbb{Q})$ by Lemma \ref{inv}.

Note that for each $(x_{1}, \ldots , x_{n}) \in C_{n}^{YB}(C_{m};\mathbb{Q}),$ its orbit is $\{ (x_{1}-i, \ldots , x_{n}-i) ~|~ i=0, 1, \ldots, m-1\}.$ Thus, $C^{n}_{\textrm{inv}}(C_{m};\mathbb{Q})$ has a basis consists of functions that are constant on each of these orbits.\\
If we let $f \in C^{n}_{\textrm{inv}}(C_{m};\mathbb{Q}),$ then
\begin{flalign*}
(\delta^{n}f)(x_{1},\ldots,x_{n+1})&=\sum_{i=1}^{n+1} (-1)^{i-1} \Big\{ (f \circ d_{i,n+1}^{l})(x_{1},\ldots,x_{n+1})-(f \circ d_{i,n+1}^{r})(x_{1},\ldots,x_{n+1}) \Big\}\\
&=\sum_{i=1}^{n+1} (-1)^{i-1} \Big\{ f(x_{1}-1,\ldots,x_{i-1}-1, x_{i+1}, \ldots, x_{n+1})\\
&\hspace{3cm} -f(x_{1},\ldots,x_{i-1},x_{i+1}+1, \ldots, x_{n+1}+1) \Big\}\\
&=0
\end{flalign*}
because $(x_{1}-1,\ldots,x_{i-1}-1, x_{i+1}, \ldots, x_{n+1})$ and $(x_{1},\ldots,x_{i-1},x_{i+1}+1, \ldots, x_{n+1}+1)$ are in the same orbit. That is, $H^{n}_{\textrm{inv}}(C_{m};\mathbb{Q}) = C^{n}_{\textrm{inv}}(C_{m};\mathbb{Q}),$ and therefore
$$\textrm{dim}H^{n}_{YB}(C_{m};\mathbb{Q}) = \textrm{dim}H^{n}_{\textrm{inv}}(C_{m};\mathbb{Q}) = \textrm{dim} C^{n}_{\textrm{inv}}(C_{m};\mathbb{Q}) = m^{n-1}.$$
\end{proof}

Note that $C_{n}^{D}(C_{m})=\text{Span}\{(x_{1},\ldots,x_{n})\in C_{n}^{YB}(C_{m})~|~ x_{i} = x_{i+1}+1 \text{~for some~} i=1,\ldots,n-1\},$ i.e., $\text{rank}C_{n}^{D}(C_{m}) = m^{n} - m(m-1)^{n-1}.$ In a similar way as above, one can show that $\textrm{dim}H^{n}_{D}(C_{m};\mathbb{Q}) = m^{n-1} - (m-1)^{n-1}.$ Furthermore, it was proved in \cite{PVY} that the set-theoretic Yang-Baxter homology group of $C_{m}$ splits into the normalized and degenerate parts, and by the universal coefficient theorem for cohomology we completely calculate the free parts:

\begin{corollary} \label{ranks}
For the cyclic biquandle $C_{m}$ of order $m,$
\begin{enumerate}
  \item $\text{rank}H_{n}^{YB}(C_{m}) = m^{n-1},$
  \item $\text{rank}H_{n}^{D}(C_{m}) = m^{n-1} - (m-1)^{n-1},$
  \item $\text{rank}H_{n}^{NYB}(C_{m}) = (m-1)^{n-1}.$
\end{enumerate}
\end{corollary}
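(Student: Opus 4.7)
The plan is to deduce all three rank formulas from the rational cohomology computations in Theorem \ref{m^{n-1}} and the paragraph immediately preceding the corollary, by applying the universal coefficient theorem for cohomology and then invoking the splitting of the integral Yang--Baxter homology of $C_m$ established in \cite{PVY}.

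For part (1), I would apply the universal coefficient theorem with $\mathbb{Q}$-coefficients: since $\mathbb{Q}$ is a field of characteristic $0$, Hom to $\mathbb{Q}$ kills torsion and $\dim_{\mathbb{Q}} H^n_{YB}(C_m;\mathbb{Q}) = \text{rank}\, H_n^{YB}(C_m)$. Theorem \ref{m^{n-1}} identifies the left-hand side as $m^{n-1}$. Part (2) follows the identical pattern, using the statement $\dim_{\mathbb{Q}} H^n_{D}(C_m;\mathbb{Q}) = m^{n-1} - (m-1)^{n-1}$ recorded just before the corollary. For part (3), I would invoke the decomposition $H_n^{YB}(C_m) \cong H_n^D(C_m) \oplus H_n^{NYB}(C_m)$ from \cite{PVY}. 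Since rank is additive over direct sums, parts (1) and (2) immediately yield
$$\text{rank}\, H_n^{NYB}(C_m) \;=\; m^{n-1} - \bigl(m^{n-1} - (m-1)^{n-1}\bigr) \;=\; (m-1)^{n-1}.$$

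The only step requiring genuine verification is the ``in a similar way as above'' assertion used in part (2). To justify it I would check that the invariant-cochain machinery of Lemma \ref{inv} restricts to the degenerate sub-complex: the cyclic translation $(x_1,\dots,x_n) \mapsto (x_1-j,\dots,x_n-j)$ preserves the defining condition $x_i = x_{i+1}+1$, so the projector $P$ from the proof of Lemma \ref{inv} maps $C^{*}_{D}(C_m;\mathbb{Q})$ to itself and commutes with $\delta^{*}$ on this subcomplex. The argument of Lemma \ref{inv} then goes through verbatim, reducing the computation to counting $C_m$-orbits of degenerate $n$-tuples; there are exactly $m^{n-1} - (m-1)^{n-1}$ such orbits, all giving cocycles by the orbit-equivalence observation in the proof of Theorem \ref{m^{n-1}}. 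This restriction/orbit-count is the mildly nontrivial obstacle; once it is in place, parts (1), (2), and (3) follow by pure bookkeeping from universal coefficients and the splitting of \cite{PVY}.
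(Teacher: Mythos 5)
Your proposal follows essentially the same route as the paper: Theorem \ref{m^{n-1}} for the Yang--Baxter part, the analogous orbit-counting argument for the degenerate subcomplex (which the paper only sketches with ``in a similar way as above''), the universal coefficient theorem to pass from $\mathbb{Q}$-cohomology to integral ranks, and the splitting of $H_n^{YB}(C_m)$ into degenerate and normalized parts from \cite{PVY} to obtain part (3) by additivity of rank. Your added justification that the translation action preserves the degeneracy condition $x_i=x_{i+1}+1$, so that the projector $P$ restricts and the count of degenerate orbits is $m^{n-1}-(m-1)^{n-1}$, correctly fills in the step the paper leaves to the reader.
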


The above partially proves Conjecture \ref{conjecture}.

\begin{corollary} \label{formula}
Let $\partial_{n}$ be the $n$th boundary homomorphism of the chain complex $(C_{n}^{YB}(C_m), \partial_{n}).$ Then we have
\begin{enumerate}
    \item $\textrm{dim} (\textrm{Im} \partial_{n})= \frac{(-1)^n(m-1)\{(-1)^{n}m^{n}+m\})}{m(m+1)}$
    \item $\textrm{dim} (\textrm{Ker} \partial_{n})= m^n- \frac{(-1)^n(m-1)\{(-1)^{n}m^{n}+m\})}{m(m+1)}$
\end{enumerate}
\end{corollary}

\begin{proof}
These follow directly from the recursive formulas given below, which are obtained by Corollary \ref{ranks}(1):\\
$$\left\{
                                          \begin{array}{l}
                                            \textrm{dim} (\textrm{Ker} \partial_{n}) + \textrm{dim} (\textrm{Im} \partial_{n}) = m^n ; \\
                                            \textrm{dim} (\textrm{Ker} \partial_{n}) - \textrm{dim} (\textrm{Im} \partial_{n+1}) = m^{n-1};\\
                                            \textrm{dim} (\textrm{Ker} \partial_{1})=m, ~\textrm{dim} (\textrm{Im} \partial_{1})=0.
                                          \end{array}
                                        \right.
$$
\end{proof}

Let $\mathcal{C}=(C_{n}, \partial_{n})$ be a chain complex. For each chain group $C_{n},$ we consider its dual cochain groups $C_{n}^{*}=\textrm{Hom}(C_{n}, \mathbb{Z})$ and $\widetilde{C_{n}}^{*}=\textrm{Hom}(C_{n}, \mathbb{Q}),$ and the inclusion $\iota: \mathbb{Z} \hookrightarrow \mathbb{Q}.$ For an $n$-cocycle $f \in Z_{n}^{*},$ we denote by $[f]_{\mathbb{Q}}=[\iota \circ f].$

Let $C_{m}$ be the cyclic biquandle of order $m.$ For each $n$-tuple $\textbf{x}=(x_{1}, \ldots, x_{n}) \in (C_{m})^{n},$ we consider the function $f_{\textbf{x}} \in C^{n}_{YB}(C_{m};\mathbb{Z})$ defined by
$$f_{\textbf{x}}(y_{1}, \ldots , y_{n})=\left\{
                                          \begin{array}{ll}
                                            1 & \hbox{if $(y_{1}, \ldots , y_{n})=(x_{1}-i, \ldots , x_{n}-i)$ for some $i=0, \ldots, m-1$;} \\
                                            0 & \hbox{otherwise,}
                                          \end{array}
                                        \right.
$$
and extended linearly on $C_{n}^{YB}(C_{m}).$
There are in total $m^{n-1}$ such functions. Let us denote them by $F_{1}, \ldots , F_{m^{n-1}}$ and define the set $FR^{n}(C_{m}; \mathbb{Z})=\{F_{i} ~|~ i=1,2, \ldots, m^{n-1}\}.$

\medskip
\begin{remark}
The set $\{[F_{i}]_{\mathbb{Q}} ~|~ F_{i} \in FR^{n}(C_{m}; \mathbb{Z}) \}$ is a basis for $H_{YB}^{n}(C_{m}; \mathbb{Q}),$ as described in the proof of Theorem \ref{m^{n-1}}.
\end{remark}

\begin{proposition}\label{basis}
Let $C_{m}$ be the cyclic biquandle of order $m.$ Then
\begin{enumerate}
  \item for any $f \in Z^{n}_{YB}(C_{m};\mathbb{Z})$ and $y \in C_{m},$ $m[f \cdot y] = m[f],$
  \item the submodule Span$\{[F_{i}] ~|~ F_{i} \in FR^{n}(C_{m}; \mathbb{Z}) \}$ is contained in $\text{Free}H^{n}_{YB}(C_{m};\mathbb{Z}),$
  \item the set $\{[F_{i}] ~|~ F_{i} \in FR^{n}(C_{m}; \mathbb{Z}) \}$ is linearly independent.
\end{enumerate}
\end{proposition}

\begin{proof}
(1) By Lemma \ref{lem}, we have $m[f]=[f \cdot (\sum\limits_{y\in C_{m}}y)] = [f\cdot (my)]= m[f\cdot y].$\\ \ \\
(2) We denote the submodule Span$\{[F_{i}] ~|~ F_{i} \in FR^{n}(C_{m}; \mathbb{Z}) \}$ of $H_{YB}^{n}(C_{m}; \mathbb{Z})$ by $M_{F}.$\\
Let $[f] \in M_{F} \cap \text{Tor}H_{YB}^{n}(C_{m}; \mathbb{Z}),$ i.e., $f=\sum_{i=1}^{m^{n-1}} k_{i}F_{i}$ ($k_{i} \in \mathbb{Z}$).\\
If $[f]=0,$ we are done. Assume that $[f] \neq 0.$\\
Then $[f]_{\mathbb{Q}} \neq 0$ in $H_{YB}^{n}(C_{m}; \mathbb{Q})$ because $H_{YB}^{n}(C_{m}; \mathbb{Q})=\text{Span}\{[F_{i}]_{\mathbb{Q}} ~|~ F_{i} \in FR^{n}(C_{m}; \mathbb{Z}) \}.$\\
Since $[f] \in \text{Tor}H_{YB}^{n}(C_{m}; \mathbb{Z}),$ $pf=\delta^{n-1}g$ for some $g \in C^{n-1}(C_{m}; \mathbb{Z})$ and for some integer $p>1.$ Thus, $p[f]_{\mathbb{Q}}=0$ in $H_{YB}^{n}(C_{m}; \mathbb{Q}),$ which contradicts with the fact that $[f]_{\mathbb{Q}} \neq 0.$ Therefore, $M_{F} \cap \text{Tor}H_{YB}^{n}(C_{m}; \mathbb{Z})$ is trivial, as desired.\\ \ \\
(3) Suppose that $\sum_{i=1}^{m^{n-1}} k_{i}[F_{i}]=0$ ($k_{i} \in \mathbb{Z}$). Then we have $0=\sum_{i=1}^{m^{n-1}} k_{i}[F_{i}]_{\mathbb{Q}},$ which implies that $k_{i}=0$ for all $i$ because $\{[F_{i}]_{\mathbb{Q}} ~|~ F_{i} \in FR^{n}(C_{m}; \mathbb{Z}) \}$ is a basis for $H_{YB}^{n}(C_{m}; \mathbb{Q}).$
\end{proof}

\bigskip
\subsection{The torsion of a finite cyclic biquandle}\label{section3.2}~\\

The 1st normalized set-theoretic Yang-Baxter homology of every finite cyclic biquandle was determined in \cite{PVY}.

\begin{theorem} \cite{PVY} \label{1st torsion}
Let $C_{m}$ be the cyclic biquandle of order $m.$ Then $H_{1}^{NYB}(C_{m}) = \mathbb{Z} \oplus \mathbb{Z}_{m}.$
\end{theorem}

We proceed to show that the 2nd set-theoretic Yang-Baxter homology of finite cyclic biquandles are torsion-free.

\begin{theorem}\label{2nd torsion-free}
For every finite cyclic biquandle, its 2nd set-theoretic Yang-Baxter homology group is torsion-free. Hence, $H_{2}^{YB}(C_{m}) = \mathbb{Z}^{m}.$
\end{theorem}

\begin{proof}
For the cyclic biquandle $C_{m}$ of order $m,$ $\textrm{dim} (\textrm{Im} \partial_{3}) = (m-1)^2$ by Corollary \ref{formula}. Thus, we only need to show that all of $(m-1)^2$ diagonal elements of the Smith normal form of $\partial_{3}$ are $\pm 1$s. 
To do this, we will find an $(m-1)^2 \times (m-1)^2$ minor $M$ of $\partial_{3}$ such that the determinant of $M$ is $\pm1,$ so that the $(m-1)$th determinant divisor of the Smith normal form of $\partial_{3}$ is $\pm1.$

Consider the minor $M$ of $\partial_{3}$ corresponding to the generators $\{(0, 1, 0), (0, 1, 1), \ldots , (0, 1, m-1), (0, 2, 0), \\ (0,2,1), \ldots , (0,2,m-1), (0,3,0), \ldots, (0,m-3,m-1), (0, m-2, 0), (0, m-2, 1), \ldots, (0, m-2, m-1), (0, m-1, 0)  \}$ of $C_{3}^{YB}(C_{m})$ and the generators $\{(1, 0), (1, 1), \ldots , (1, m-1), (2, 0), (2,1), \ldots , (2,m-1), (3,0), \ldots, (m-3,m-1), (m-2, 0), (m-2, 1), \ldots, (m-2, m-1), (0, m-1)  \}$ of $C_{2}^{YB}(C_{m})$. Then $M$ forms a lower triangular matrix with diagonal elements all equal to $1$, except for the last one, which is $-1.$ Because for each $1 \leq i \leq m-2$ and $1 \leq j \leq m-1,$

{\centering $\partial_{3}(0,i,j)=(i,j)-(i+1,j+1)-(m-1,j)+(0,j+1)+(m-1,i-1)-(0,i).$\par}
Note that in the image of $\partial_{3}$ above, the first term is $(i,j)$, and the first entries of all the remaining terms are either greater than $i$ or equal to $0.$ Moreover,

{\centering $\partial_{3}(0,m-1,0)=(m-1,m-2)-(0,m-1).$\par}
Therefore, the determinant of $M$ is $-1,$ as desired.
\end{proof}

\begin{remark}
By the splitness of the set-theoretic Yang-Baxter homology of cyclic biquandles \cite{PVY}, Corollary \ref{ranks}, Theorem \ref{1st torsion},  and Theorem \ref{2nd torsion-free}, we completely compute the 1st and 2nd set-theoretic Yang-Baxter homology, degenerate set-theoretic Yang-Baxter homology, and normalized set-theoretic Yang-Baxter homology of all finite cyclic biquandles as follows.\\ \ \\
For every positive integer $m,$
$$\left\{
  \begin{array}{ll}
    H_{1}^{YB}(C_{m}) = \mathbb{Z} \oplus \mathbb{Z}_{m}, & \hbox{} \\
    H_{1}^{D}(C_{m}) = 0, & \hbox{} \\
    H_{1}^{NYB}(C_{m}) = \mathbb{Z} \oplus \mathbb{Z}_{m}. & \hbox{}
  \end{array}
\right.$$

$$\left\{
  \begin{array}{ll}
    H_{2}^{YB}(C_{m}) = \mathbb{Z}^{m}, & \hbox{} \\
    H_{2}^{D}(C_{m}) = \mathbb{Z}, & \hbox{} \\
    H_{2}^{NYB}(C_{m}) = \mathbb{Z}^{(m-1)}. & \hbox{}
  \end{array}
\right.$$

\end{remark}

We move on to compute the torsion in the 3rd set-theoretic Yang-Baxter homology of $C_3.$ Before that, we investigate some non-trivial torsion homology classes.

\begin{proposition}
Let $C_{m}$ be the cyclic biquandle of order $m.$ For a fixed $y \in C_{m},$ the following is a $(2n+1)$-cycle:
$$A_{2n+1,y}:=\sum\limits_{a_{1},\ldots,a_{n} \in C_{m}}(y,a_{1},a_{1},\ldots,a_{n},a_{n}).$$
\end{proposition}
\begin{proof}
By direct computation.
\end{proof}

\begin{theorem}\cite{PVY}\label{pvy}
Let $C_{m}$ be the cyclic biquandle of order $m.$ For $n \geq 2,$ consider the maps $\theta_{n} \in C_{YB}^{n}(C_{m}; \mathbb{Z}_{2})$ defined by
$$\theta_{n}(x_{1}, \ldots, x_{n})=\prod_{i=1}^{n}x_{i} ~(\hbox{mod~} 2)$$
and extending linearly to all elements of $C^{YB}_{n}(C_{m}).$ Then $\theta_{n}$ is an $n$-cocycle when $m$ is even.
\end{theorem}

\begin{theorem}
Let $C_{m}$ be the cyclic biquandle of order $m.$ If $m=4k-2$ ($k \in \mathbb{N}$), then the homology class $[A_{2n+1,y}-A_{2n+1,y+1}]$ is non-trivial and belongs to $\hbox{Tor}H^{YB}_{2n+1}(C_{m}).$
\end{theorem}
\begin{proof}
For a fixed $y \in C_{m},$ we consider $B_{2n+2,y}:=\sum\limits_{z,a_{1}, \ldots ,a_{n} \in C_{m}}(z,y,a_{1},a_{1},...,a_{n},a_{n}).$
By direct computation, we have
$$\partial_{2n+2}(B_{2n+2,y})=m(A_{2n+1,y}-A_{2n+1,y+1}).$$
Thus, we only need to show $[A_{2n+1,y}-A_{2n+1,y+1}]$ is non-trivial.\\
Consider the cocycle $\theta_{2n+1}$ given in Theorem \ref{pvy}. The restriction $\theta_{2n+1}^{Z}:=\theta_{2n+1}|_{Z_{2n+1}^{YB}(C_{m})}$ induces the quotient homomorphism $\bar{\theta}_{2n+1}^{Z}:H^{YB}_{2n+1}(C_{m}) \rightarrow \mathbb{Z}_{2},$ where $Z_{2n+1}^{YB}(C_{m}) = \textrm{Ker} \partial_{2n+1}.$ Since $m=4k-2,$ there are $2k-1$ odd numbers in $C_{m}.$ Therefore, $\bar{\theta}_{2n+1}^{Z}([A_{2n+1,y}-A_{2n+1,y+1}]) \equiv (2k-1)^{n} \equiv 1~(\hbox{mod~} 2),$ that is, $[A_{2n+1,y}-A_{2n+1,y+1}]$ is non-trivial.
\end{proof}

We then approximate the torsions in the higher-dimensional set-theoretic Yang-Baxter cohomology of finite cyclic biquandles. The following is an annihilation theorem for those torsions.

\begin{theorem}\label{torsion}
Let $C_{m}$ be the cyclic biquandle of order $m.$ Then the torsion subgroup of $H^{n}_{YB}(C_{m};\mathbb{Z})$ is annihilated by $m$ if $m$ is odd and by $2m$ if $m$ is even.
\end{theorem}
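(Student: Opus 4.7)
The plan is to prove the stronger statement that $m\cdot[f]=0$ for every $[f]\in\mathrm{Tor}\,H^{n}_{YB}(C_{m};\mathbb{Z})$, regardless of the parity of $m$, which immediately yields both cases of the theorem. Throughout, write $\sigma$ for the shift operator $(\sigma f)(x_{1},\ldots,x_{n})=f(x_{1}-1,\ldots,x_{n}-1)$; by the computation in the proof of Lemma \ref{inv} this coincides with the action of every $y\in C_{m}$, and hence commutes with $\delta$ by Proposition \ref{prop}. Set $N:=\sum_{k=0}^{m-1}\sigma^{k}$. The structural fact I will exploit is that the diagonal shift on $C_{m}^{n}$ is free for every $n\geq 1$ (no nontrivial shift fixes an $n$-tuple), which makes $C^{n}_{YB}(C_{m};\mathbb{Z})$ a free $\mathbb{Z}[\sigma]/(\sigma^{m}-1)$-module of rank $m^{n-1}$; in such a free module the identities $\ker(\sigma-1)=\mathrm{Image}(N)$ and $\ker(N)=\mathrm{Image}(\sigma-1)$ hold.

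Starting from a torsion cocycle $f$, the first step is to prove $Nf=0$ at the chain level. Since $\sigma$ commutes with $\delta$, the cochain $Nf$ is an invariant cocycle, so it expands as $Nf=\sum_{i}\alpha_{i}F_{i}$ in the orbit-indicator basis of $C^{n}_{\mathrm{inv}}(C_{m};\mathbb{Z})$ introduced in Section \ref{section3.2}. Rationally, $[Nf]_{\mathbb{Q}}=m[f]_{\mathbb{Q}}=0$; since $\{[F_{i}]_{\mathbb{Q}}\}$ is a $\mathbb{Q}$-basis for $H^{n}_{YB}(C_{m};\mathbb{Q})$ by the remark preceding Lemma \ref{basis}, every $\alpha_{i}$ vanishes, whence $Nf=0$. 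Freeness over $\mathbb{Z}[\sigma]/(\sigma^{m}-1)$ then produces $h\in C^{n}_{YB}(C_{m};\mathbb{Z})$ with $f=(\sigma-1)h$, and the identity $(\sigma-1)\delta h=\delta f=0$ places $\delta h$ in $\ker(\sigma-1)$, yielding $\delta h=Nk$ for some $k\in C^{n+1}_{YB}(C_{m};\mathbb{Z})$.

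Next I invoke the chain homotopy underlying Lemma \ref{lem}: the operator $\phi:C^{p}_{YB}\to C^{p-1}_{YB}$ defined by $(\phi g)(x_{1},\ldots,x_{p-1})=\sum_{y\in C_{m}}g(x_{1},\ldots,x_{p-1},y)$ commutes with $\sigma$ (by reindexing the summation variable, so also with $N$) and satisfies $\delta\phi-\phi\delta=(-1)^{p+1}m(\sigma-1)$ on $C^{p}_{YB}$. Applied to $h$,
\[
mf = m(\sigma-1)h = (-1)^{n+1}\bigl(\delta(\phi h)-\phi(\delta h)\bigr) = (-1)^{n+1}\delta(\phi h) - (-1)^{n+1}N(\phi k),
\]
using $\phi(Nk)=N(\phi k)$. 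Thus $mf+(-1)^{n+1}N(\phi k)$ is a coboundary, and $[mf]=(-1)^{n}[N\phi k]$ in $H^{n}_{YB}(C_{m};\mathbb{Z})$.

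Finally, $N\phi k$ is invariant and a cocycle: $\delta(N\phi k)=N\delta(\phi k)=N\phi(\delta k)=\phi(N\delta k)=\phi(\delta Nk)=\phi(\delta^{2}h)=0$, where the second equality combines the chain homotopy on $C^{n+1}_{YB}$ with $N(\sigma-1)k=0$. Consequently $[N\phi k]$ lies in $\mathrm{Span}\{[F_{i}]\}\subseteq\mathrm{Free}\,H^{n}_{YB}(C_{m};\mathbb{Z})$ by Lemma \ref{basis}(2), while $[mf]$ is torsion by hypothesis; the intersection of the free and torsion subgroups of any abelian group being zero forces $m[f]=0$, completing the proof. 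The principal technical step will be justifying the freeness of $C^{n}_{YB}(C_{m};\mathbb{Z})$ as a $\mathbb{Z}[\sigma]/(\sigma^{m}-1)$-module and the resulting solvability of $f=(\sigma-1)h$ and $\delta h=Nk$ within integer cochains, both of which reduce to the observation that the diagonal shift on $C_{m}^{n}$ has no nontrivial stabilizers.
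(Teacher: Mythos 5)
Your argument is correct, and it takes a genuinely different route from the paper's proof --- indeed it proves something stronger than the stated theorem. The paper works entirely at the level of cohomology classes: it establishes the integral relation $[f\cdot y]-[f]=[\Delta_y f]$ via an explicit coboundary, telescopes it over all of $C_m$, and thereby picks up the triangular coefficient $\tfrac{m(m+1)}{2}$ in front of $[\Delta_y f]$; since only $m[\Delta_y f]=0$ is known, this coefficient is what forces the extra factor $2$ when $m$ is even. You instead exploit the $\mathbb{Z}[\sigma]/(\sigma^m-1)$-module structure of the cochain groups: because the diagonal shift acts freely on $(C_m)^n$, the cochain modules are free (hence cohomologically trivial for the cyclic group, giving $\ker N=\mathrm{Im}(\sigma-1)$ and $\ker(\sigma-1)=\mathrm{Im}\,N$), and a torsion cocycle is literally annihilated by the norm $N$ --- your justification of $Nf=0$ via the remark before Lemma \ref{basis} is sound, since by the splitting in Lemma \ref{inv} an invariant cocycle that is a rational coboundary must vanish. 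Writing $f=(\sigma-1)h$, $\delta h=Nk$, and applying the integral homotopy $\delta\phi-\phi\delta=(-1)^{p+1}m(\sigma-1)$ (which the paper only records for cocycles inside Lemma \ref{lem}, but which holds in general by the same computation with the explicit face maps $d^{l}_{i,n+1},d^{r}_{i,n+1}$ of $C_m$, using $\sigma\delta=\delta\sigma$ from Proposition \ref{prop} and the reindexing of the last variable), you exhibit $mf$ modulo coboundaries as an invariant cocycle, hence an element of $\mathrm{Span}\{[F_i]\}$, and Lemma \ref{basis}(2) finishes the argument exactly as in the paper's last step. The payoff is a uniform bound: $m\,\mathrm{Tor}H^n_{YB}(C_m;\mathbb{Z})=0$ for every $m$, with no parity distinction, which is sharper than the paper's $2m$ bound for even $m$, is consistent with Conjecture \ref{conjecture} and Table \ref{table:comp}, and addresses the deficiency the authors themselves point out in the final section; the price is the (routine but necessary) verification of freeness over $\mathbb{Z}[\sigma]/(\sigma^m-1)$ and of the homotopy identity in its non-cocycle form, which you correctly flag and which does reduce to the absence of nontrivial stabilizers for the diagonal shift.
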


\begin{proof}
Let $f \in Z_{YB}^{n}(C_{m} ; \mathbb{Z})$ be an $n$-cocycle, and let $y \in C_{m}$ be fixed.
Consider $f_{y} \in C_{YB}^{n-1}(C_{m} ; \mathbb{Z}),$ defined by
$$f_{y}(x_{1},\ldots,x_{n-1}) =f(x_{1},\ldots,x_{n-1},y).$$
Note that
\begin{flalign*}
\delta^{n-1}(f_{y})(x_{1},\ldots,x_{n})&=\delta^{n-1}(f_{y})(x_{1},\ldots,x_{n}) - \delta^{n}(f)(x_{1},\ldots,x_{n}, y)\\
&=\sum\limits_{i=1}^{n}(-1)^{i-1}f_{y} \circ (d_{i,n}^{l} - d_{i,n}^{r})(x_{1},\ldots,x_{n})\\
&\hspace{3cm} - \sum\limits_{i=1}^{n+1}(-1)^{i-1}f \circ (d_{i,n+1}^{l} - d_{i,n+1}^{r})(x_{1}, \ldots, x_{n}, y)\\
&=\sum\limits_{i=1}^{n}(-1)^{i} \Big\{ f_{y} \circ d_{i,n}^{r}(x_{1},\ldots,x_{n}) - f \circ d_{i,n+1}^{r}(x_{1},\ldots,x_{n}, y) \Big\}\\
&\hspace{3cm} -(-1)^{n} f \circ (d_{n+1,n+1}^{l} - d_{n+1,n+1}^{r}) (x_{1},\ldots,x_{n}, y)\\
&=\sum\limits_{i=1}^{n}(-1)^{i} \Big\{ f_{y} \circ d_{i,n}^{r}(x_{1},\ldots,x_{n}) - f_{y+1} \circ d_{i,n}^{r}(x_{1},\ldots,x_{n}) \Big\}\\
&\hspace{3cm} -(-1)^{n} \Big\{ (f \cdot y)(x_{1},\ldots,x_{n}) - f(x_{1},\ldots,x_{n}) \Big\}.
\end{flalign*}
Denote by $\Delta_{y}f := \sum\limits_{i=1}^{n}(-1)^{i} ( f_{y} \circ d_{i,n}^{r} - f_{y+1} \circ d_{i,n}^{r}).$
Then $\Delta_{y}f - (-1)^{n} (f \cdot y - f) \in \text{Im}\delta^{n-1}.$\\
Thus, we have
\begin{equation}
[f \cdot y] - [f] = [\Delta_{y}f]. \label{eq(1)}
\end{equation}
Then since $[f \cdot y^{k}] - [f \cdot y^{k-1}] = [\Delta_{y}f \cdot y^{k-1}]$ for every $k \in \mathbb{N},$ $[f \cdot y^{k}] - [f] = \sum\limits_{j=0}^{k-1}[\Delta_{y}f \cdot y^{j}].$
Therefore,
\begin{equation}
\Big[ f \cdot (\sum\limits_{i=1}^{m}y^{i} ) \Big] - [mf] = \sum\limits_{i=1}^{m}([f \cdot y^{i}] - [f]) = \Big[ \Delta_{y}f \cdot (\sum\limits_{i=1}^{m}\sum\limits_{j=0}^{i-1} y^{j}) \Big]. \label{eq(2)}
\end{equation}

Note that
\begin{flalign*}
f_{y} \circ d_{i,n}^{r}(x_{1},\ldots,x_{n}) &= f(x_{1},\ldots,x_{i-1}, x_{i+1}+1, \ldots , x_{n}+1, y)\\
&= f_{y} \circ d_{i,n}^{l}(x_{1}+1,\ldots,x_{n}+1)\\
&= ((f_{y} \circ d_{i,n}^{l}) \cdot y^{-1})(x_{1},\ldots,x_{n}),
\end{flalign*}
i.e., $(f_{y} \circ d_{i}^{r}) \cdot y = f_{y} \circ d_{i}^{l}.$ Then we have
\begin{flalign*}
\delta^{n-1}(f_{y}-f_{y+1}) &=\sum\limits_{i=1}^{n}(-1)^{i-1} \Big\{ (f_{y} \circ d_{i,n}^{l} - f_{y} \circ d_{i,n}^{r}) - (f_{y+1} \circ d_{i,n}^{l} - f_{y+1} \circ d_{i,n}^{r}) \Big\} \\
&=\sum\limits_{i=1}^{n}(-1)^{i-1} \Big\{ (f_{y} \circ d_{i,n}^{l} - f_{y+1} \circ d_{i,n}^{l}) - (f_{y} \circ d_{i,n}^{r} - f_{y+1} \circ d_{i,n}^{r}) \Big\} \\
&=\sum\limits_{i=1}^{n}(-1)^{i-1} \Big\{ ((f_{y} \circ d_{i,n}^{r}) \cdot y - (f_{y+1} \circ d_{i,n}^{r}) \cdot y) - (f_{y} \circ d_{i,n}^{r} - f_{y+1} \circ d_{i,n}^{r}) \Big\} \\
&= - (\Delta_{y}f) \cdot y + \Delta_{y}f.
\end{flalign*}
Then $[(\Delta_{y}f) \cdot y] = [\Delta_{y}f],$ and therefore, for every $k \in \mathbb{N},$
\begin{equation}
[\Delta_{y}f] = [(\Delta_{y}f) \cdot y] = \cdots = [(\Delta_{y}f) \cdot y^{k}]. \label{eq(3)}
\end{equation}

By the equations \ref{eq(2)} and \ref{eq(3)}, we have
\begin{equation}
\Big[f \cdot (\sum\limits_{i=1}^{m}y^{i}) \Big] - [mf] = \Big[\frac{m(m+1)}{2} \Delta_{y}f \Big]. \label{eq(4)}
\end{equation}
Moreover, $m[\Delta_{y}f] = m[f \cdot y] - m[f] = 0$ by the equation \ref{eq(1)} and Lemma \ref{basis}(1).\\
Thus, by the equation \ref{eq(4)}
$$\left\{
  \begin{array}{ll}
    m[f] = \Big[f \cdot (\sum\limits_{i=1}^{m}y^{i}) \Big], & \hbox{if $m$ is odd;} \\
    2m[f] = 2\Big[f \cdot (\sum\limits_{i=1}^{m}y^{i}) \Big], & \hbox{if $m$ is even.}
  \end{array}
\right.$$
Note that $\Big[f \cdot (\sum\limits_{i=1}^{m}y^{i}) \Big]$ belongs to the free part of $H_{YB}^{n}(C_{m} ; \mathbb{Z})$ by Lemma \ref{basis}(2).\\
Therefore, $m \text{Tor}H_{YB}^{n}(C_{m} ; \mathbb{Z})=0$ if $m$ is odd and $2m \text{Tor}H_{YB}^{n}(C_{m} ; \mathbb{Z})=0$ if $m$ is even.
\end{proof}

\medskip
\begin{remark}
The same results can be obtained for the degenerate and the normalized set-theoretic Yang-Baxter (co)homology of a finite cyclic biquandle as in Theorem \ref{torsion}, because the set-theoretic Yang-Baxter homology splits into the degenerate part and the normalized part \cite{PVY}.
\end{remark}

Combining the results above, one can compute the torsion in the 3rd set-theoretic Yang-Baxter homology of $C_3.$

\begin{theorem}
$H^{YB}_{3}(C_{3})=\mathbb{Z}^{9}\oplus \mathbb{Z}_{3}$.
\end{theorem}

\begin{proof}
Note that $\textrm{Free} H^{YB}_{3}(C_{3})=\mathbb{Z}^{9}$ by Corollary \ref{ranks}(1).
For $\partial_{4}: C_{4}^{YB}(C_{3}) \rightarrow C_{3}^{YB}(C_{3}),$ $\textrm{dim} (\textrm{Im} \partial_{4}) = 14$ by Corollary \ref{formula}.
Consider the $13 \times 13$ minor $M$ of $\partial_{4}$ corresponding to the generators $\{(0,1,0,0), \\(0,1,0,1), (0,1,0,2), (0,1,1,0), (0,1,1,1), (0,1,1,2), (0,1,2,0), (0,1,2,1), (0,1,2,2), (0,0,2,2)+(0,1,0,0), \\(0,0,2,0)+(0,1,0,1), (0,0,2,1)+(0,1,0,2), (0,0,1,1)+(0,1,2,2)-(0,0,2,2)-(0,1,0,0) \}$ of $C_{4}^{YB}(C_{3})$ and the generators $\{(1,0,0), (1,0,1), (1,0,2), (1,1,0), (1,1,1), (1,1,2), (1,2,0), (1,2,1), (1,2,2), (0,2,2), \\(0,2,0), (0,2,1), (0,1,1)  \}$ of $C_{3}^{YB}(C_{3})$ given below.
$$
M=\begin{bmatrix}
1 & 0 & 0 & 0 & 0 & 0 & 0 & 0 & 0 & 0 & 0 & 0 & 0 \\
0 & 1 & 0 & 0 & 0 & 0 & 0 & 0 & 0 & 0 & 0 & 0 & 0 \\
0 & 0 & 1 & 0 & 0 & 0 & 0 & 0 & 0 & 0 & 0 & 0 & 0 \\
0 & 0 & 0 & 1 & 0 & 0 & 0 & 0 & 0 & 0 & 0 & 0 & 0 \\
0 & 0 & 0 & 0 & 1 & 0 & 0 & 0 & 0 & 0 & 0 & 0 & 0 \\
0 & 0 & 0 & 0 & 0 & 1 & 0 & 0 & 0 & 0 & 0 & 0 & 0 \\
0 & 0 & 0 & 0 & 0 & 0 & 1 & 0 & 0 & 0 & 0 & 0 & 0 \\
0 & 0 & 0 & 0 & 0 & 0 & 0 & 1 & 0 & 0 & 0 & 0 & 0 \\
0 & 0 & 0 & 0 & 0 & 0 & 0 & 0 & 1 & 0 & 0 & 0 & 0 \\
0 & 0 & 0 & 0 & 1 & 0 & 0 & 0 & 0 & 1 & 0 & 0 & 0 \\
0 & 0 & 0 & 0 & 0 & 1 & 0 & 0 & 0 & 0 & 1 & 0 & 0 \\
0 & 0 & 0 & 1 & 0 & 0 & 0 & 0 & 0 & 0 & 0 & 1 & 0 \\
0 & 0 & 0 & 0 & 1 & 1 & -1 & 0 & 0 & 0 & 0 & 0 & 1 
\end{bmatrix}
$$
Since the determinant of $M$ is $1,$ the $13$th determinant divisor of the Smith normal form of $\partial_{4}$ is $1.$ Therefore, if $H^{YB}_{3}(C_{3})$ is not torsion-free, then $\textrm{Tor} H^{YB}_{3}(C_{3})=\mathbb{Z}_{3}$ by the universal coefficient theorem for cohomology and Theorem \ref{torsion}.

Consider the $3$-cocycle $\phi \in C_{YB}^{3}(C_3;\mathbb{Z}_{3})$ defined by
$$\phi(x_{1},x_{2},x_{3}) = \begin{cases} 
	1 & \text{if } (x_{1},x_{2},x_{3}) = (0,1,1), (2,2,0) ;\\ 
  	2 & \text{if } (x_{1},x_{2},x_{3}) = (0,1,2), (2,2,2) ;\\ 
        0 & \text{otherwise.} 
     \end{cases}$$
For the induced homomorphism $\bar{\phi}:H_{3}^{YB}(C_3)\rightarrow \mathbb{Z}_{3}$ given by $\bar{\phi}([a])=\phi(a),$ we have $\bar{\phi}([A_{3,1}-A_{3,2}])=\phi(A_{3,1}-A_{3,2})=1 \neq 0.$ That is, $[A_{3,1}-A_{3,2}] \neq 0$ in $H_{3}^{YB}(C_3).$ Moreover, $[A_{3,1}-A_{3,2}]$ is a torsion element as $\partial_{4}(B_{4,1})=3(A_{3,1}-A_{3,2}),$ where $B_{4,1}=\sum\limits_{z,a \in C_{3}}(z,1,a,a).$ Hence, $H^{YB}_{3}(C_{3})$ is not torsion-free, as desired.
\end{proof}

Table \ref{table:comp} summarizes the results of the set-theoretic Yang-Baxter cohomology of finite cyclic biquandles obtained so far.

\begin{table}[h]
\centering
\caption{Set-theoretic Yang-Baxter homology groups of cyclic biquandles}\label{table:comp}
\begin{tabular}{@{}l|llllll@{}}
 \hline
    $n$&$1$&$2$&$3$&$4$&$5$&$\cdots$\\
    \hline

    $H_n^{YB}(C_{2})$  &  $\mathbb{Z} \oplus \mathbb{Z}_{2}$  &  $\mathbb{Z}^{2}$  &  $\mathbb{Z}^{4} \oplus \mathbb{Z}_{2}$  &  $\mathbb{Z}^{8}$  &  $\mathbb{Z}^{16} \oplus \mathbb{Z}_{2}$ & \\
    $H_n^{D}(C_{2})$  &  $0$  &  $\mathbb{Z}$  &  $\mathbb{Z}^{3}$  &  $\mathbb{Z}^{7}$  &  $\mathbb{Z}^{15}$ & $\cdots$\\
    $H_n^{NYB}(C_{2})$  &  $\mathbb{Z} \oplus \mathbb{Z}_{2}$  &  $\mathbb{Z}$  &  $\mathbb{Z} \oplus \mathbb{Z}_{2}$  &  $\mathbb{Z}$  &  $\mathbb{Z} \oplus \mathbb{Z}_{2}$ &\\

    \hline

    $H_n^{YB}(C_{3})$  &  $\mathbb{Z} \oplus \mathbb{Z}_{3}$  &  $\mathbb{Z}^{3}$  &  $\mathbb{Z}^{9} \oplus \mathbb{Z}_{3}$  &  $\mathbb{Z}^{27} \oplus \textcolor{red}{\,?}$  &  $\mathbb{Z}^{81} \oplus \textcolor{red}{\,?}$ & \\
    $H_n^{D}(C_{3})$  &  $0$  &  $\mathbb{Z}$  &  $\mathbb{Z}^{5} \oplus \textcolor{red}{\,?}$  &  $\mathbb{Z}^{19} \oplus \textcolor{red}{\,?}$  &  $\mathbb{Z}^{65} \oplus \textcolor{red}{\,?}$ & $\cdots$\\
    $H_n^{NYB}(C_{3})$  &  $\mathbb{Z} \oplus \mathbb{Z}_{3}$  &  $\mathbb{Z}^{2}$  &  $\mathbb{Z}^{4}  \oplus \textcolor{red}{\,?}$  &  $\mathbb{Z}^{8} \oplus \textcolor{red}{\,?}$  &  $\mathbb{Z}^{16}  \oplus \textcolor{red}{\,?}$ & \\

    \hline

    $H_n^{YB}(C_{4})$  &  $\mathbb{Z} \oplus \mathbb{Z}_{4}$  &  $\mathbb{Z}^{4}$  &  $\mathbb{Z}^{16}  \oplus \textcolor{red}{\,?}$  &  $\mathbb{Z}^{64} \oplus \textcolor{red}{\,?}$  &  $\mathbb{Z}^{256}  \oplus \textcolor{red}{\,?}$ & \\
    $H_n^{D}(C_{4})$  &  $0$  &  $\mathbb{Z}$  &  $\mathbb{Z}^{7} \oplus \textcolor{red}{\,?}$  &  $\mathbb{Z}^{37} \oplus \textcolor{red}{\,?}$  &  $\mathbb{Z}^{175} \oplus \textcolor{red}{\,?}$ & $\cdots$\\
    $H_n^{NYB}(C_{4})$  &  $\mathbb{Z} \oplus \mathbb{Z}_{4}$  &  $\mathbb{Z}^{3}$  &  $\mathbb{Z}^{9}  \oplus \textcolor{red}{\,?}$  &  $\mathbb{Z}^{27} \oplus \textcolor{red}{\,?}$  &  $\mathbb{Z}^{81}  \oplus \textcolor{red}{\,?}$ & \\

    \hline

    $H_n^{YB}(C_{5})$  &  $\mathbb{Z} \oplus \mathbb{Z}_{5}$  &  $\mathbb{Z}^{5}$  &  $\mathbb{Z}^{25}  \oplus \textcolor{red}{\,?}$  &  $\mathbb{Z}^{125} \oplus \textcolor{red}{\,?}$  &  $\mathbb{Z}^{625}  \oplus \textcolor{red}{\,?}$ & \\
    $H_n^{D}(C_{5})$  &  $0$  &  $\mathbb{Z}$  &  $\mathbb{Z}^{9} \oplus \textcolor{red}{\,?}$  &  $\mathbb{Z}^{61} \oplus \textcolor{red}{\,?}$  &  $\mathbb{Z}^{369} \oplus \textcolor{red}{\,?}$ & $\cdots$\\
    $H_n^{NYB}(C_{5})$  &  $\mathbb{Z} \oplus \mathbb{Z}_{5}$  &  $\mathbb{Z}^{4}$  &  $\mathbb{Z}^{16}  \oplus \textcolor{red}{\,?}$  &  $\mathbb{Z}^{64} \oplus \textcolor{red}{\,?}$  &  $\mathbb{Z}^{256} \oplus \textcolor{red}{\,?}$ & \\

    \hline

    $\qquad\vdots$  &  $\quad\,\vdots$  &  $\,\vdots$  &  $\qquad\vdots$  &  $\qquad\vdots$  &  $\qquad\;\vdots$ & \\

\end{tabular}
\end{table}

\bigskip

\section{Future research}

Based on Conjecture \ref{conjecture}, we can observe that Theorem \ref{torsion} closely approximates the torsions of the set-theoretic Yang-Baxter (co)homology of finite cyclic biquandles $C_{m}$ when $m$ is odd, but not when $m$ is even. Furthermore, not much is known about the (co)homology groups of other biquandles, such as Alexander biquandles.

\bigskip
\section{Acknowledgements}
The work of Xiao Wang was supported by the National Natural Science Foundation of China (No. 11901229, No. 22341304, No. 12371029). The work of Seung Yeop Yang was supported by the National Research Foundation of Korea (NRF) grant funded by the Korean government (MSIT)(No. 2022R1A5A1033624) and by Global - Learning \& Academic research institution for Master’s·PhD students, and Postdocs (G-LAMP) Program of the National Research Foundation of Korea (NRF) grant funded by the Ministry of Education (No. RS-2023-00301914).

\bigskip

\end{document}